 \newtheoremstyle{mytheorem}
 {3pt}
 {3pt}
 {\slshape}
 {}
 {\bfseries}
 {.}
 { }
 {}
\numberwithin{equation}{section}
\theoremstyle{theorem}
\newtheorem{theorem}{Theorem}[section]
\newtheorem*{theorem*}{Theorem}
\newtheorem{proposition}[theorem]{Proposition}
\theoremstyle{definition}
\newtheorem{definition}{Definition}[section]
\newtheorem{example}{Example}[section]
\newtheorem*{example*}{Example}
\theoremstyle{remark}
\newtheorem*{remark*}{Remark}
\newtheorem*{remarks*}{Remarks}
\newcommand{\Keywords}[1]{\ifthenelse{\isempty{#1}}{}{\smallskip \smallskip \noindent \textbf{Keywords}. #1}}
\newcommand{\MSC}[2][2010]{\ifthenelse{\isempty{#2}}{}{\smallskip \smallskip \noindent \textbf{#1MSC}. #2}}
\newcommand{\abstractnote}[1]{\ifthenelse{\isempty{#1}}{}{\smallskip \smallskip \noindent \textsuperscript{\dag}#1}}
\def\specialsection{\@startsection{section}{1}%
  \z@{\linespacing\@plus\linespacing}{.5\linespacing}%
  {\normalfont}}
\def\section{\@startsection{section}{1}%
  \z@{.7\linespacing\@plus\linespacing}{.5\linespacing}%
  {\normalfont\scshape}}
\patchcmd{\@settitle}{\uppercasenonmath\@title}{\Large\boldmath}{}{}
\patchcmd{\@settitle}{\begin{center}}{\begin{flushleft}}{}{}
\patchcmd{\@settitle}{\end{center}}{\end{flushleft}}{}{}
\patchcmd{\@setauthors}{\MakeUppercase}{\normalsize}{}{}
\patchcmd{\@setauthors}{\centering}{\raggedright}{}{}
\patchcmd{\section}{\scshape}{\large\bfseries\boldmath}{}{}
\patchcmd{\subsection}{\bfseries}{\bfseries\boldmath}{}{}
\renewcommand{\@secnumfont}{\bfseries}
\patchcmd{\@startsection}{\@afterindenttrue}{\@afterindentfalse}{}{}
\patchcmd{\abstract}{\leftmargin3pc}{\leftmargin1pc}{}{}
\def\maketitle{\par
  \@topnum\z@ 
  \@setcopyright
  \thispagestyle{empty}
  \ifx\@empty\shortauthors \let\shortauthors\shorttitle
  \else \andify\shortauthors
  \fi
  \@maketitle@hook
  \begingroup
  \@maketitle
  \toks@\@xp{\shortauthors}\@temptokena\@xp{\shorttitle}%
  \toks4{\def\\{ \ignorespaces}}
  \edef\@tempa{%
    \@nx\markboth{\the\toks4
      \@nx\MakeUppercase{\the\toks@}}{\the\@temptokena}}%
  \@tempa
  \endgroup
  \c@footnote\z@
  \@cleartopmattertags
}
\newcommand{\sP}{\mathscr{P}}
\newcommand{\sC}{\mathscr{C}}
\newcommand{\qbinom}[2]{\begin{bmatrix}#1\\#2\end{bmatrix}}
\title{Reciprocity between partitions and compositions}
\author[G. Beck]{George Beck}
\address[G. Beck]{Department of Mathematics and Statistics, Dalhousie University, Halifax, Nova Scotia, B3H 4R2, Canada\newline\indent and Wolfram Research, Inc., Champaign, IL 61820, USA}
\email{george.beck@gmail.com}
\author[S. Chern]{Shane Chern}
\address[S. Chern]{Department of Mathematics and Statistics, Dalhousie University, Halifax, Nova Scotia, B3H 4R2, Canada}
\email{chenxiaohang92@gmail.com}
\begin{document}

\maketitle

\begin{abstract}

In this paper, we extend the work of Andrews, Beck and Hopkins by considering partitions and compositions with bounded gaps between each pair of consecutive parts. We show that both their generating functions and two matrices determined by them satisfy certain reciprocal relations.

\Keywords{Partitions, compositions, reciprocity.}

\MSC{05A17, 11P84.}
\end{abstract}

\section{Introduction}

A \textit{partition} of a natural number $n$ is a \textit{nondecreasing} sequence of positive integers whose sum equals $n$. For any partition $\lambda$, we denote by $|\lambda|$ the \textit{size} (that is, the sum of the parts) of $\lambda$, and by $\ell(\lambda)$ the \textit{length} (that is, the number of parts) of $\lambda$. We write $\lambda$ as $(\lambda_1,\lambda_2,\ldots,\lambda_{\ell(\lambda)})$ with $\lambda_1\le \lambda_2\le \cdots\le \lambda_{\ell(\lambda)}$.

In answer to a conjecture of Hanna \cite[A126796]{OEIS}, which is related to complete partitions defined by Park \cite{Par1998}, Andrews, Beck and Hopkins \cite{ABH2020} introduced $m$-step partitions.

\begin{definition}
	Given a nonnegative integer $m$, a partition $(\lambda_1,\lambda_2,\ldots,\lambda_{\ell(\lambda)})$ is called \textit{$m$-step} if $\lambda_j\le m+\sum_{i=0}^{j-1}\lambda_i$ for each part $\lambda_j$, with $\lambda_0=0$ for convenience.
\end{definition}

Let $K(n,m)$ denote the number of $m$-step partitions of $n$. Andrews, Beck and Hopkins extended Hanna's conjecture and proved the following relation.

\begin{theorem}[{\cite[Theorem 9]{ABH2020}}]\label{th:ABH1}
	For each positive integer $m$,
	\begin{equation}
	\sum_{n\ge 0}K(n,m)q^n (q;q)_{m+n}=1.
	\end{equation}
\end{theorem}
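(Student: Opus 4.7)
The plan is to recast Theorem~\ref{th:ABH1} as a bijection between ordinary partitions and certain pairs $(\lambda,\mu)$. First I would divide both sides by $(q;q)_\infty$ and use $(q;q)_{m+n}=(q;q)_\infty/(q^{m+n+1};q)_\infty$, so that the claim becomes equivalent to
$$\sum_{n\ge 0}K(n,m)\,\frac{q^n}{(q^{m+n+1};q)_\infty}=\frac{1}{(q;q)_\infty}.$$
The right-hand side is the generating function $\sum_\nu q^{|\nu|}$ for all partitions. On the left, for each fixed $n$, the factor $q^n/(q^{m+n+1};q)_\infty$ weighted by $K(n,m)$ is the generating function for pairs $(\lambda,\mu)$ in which $\lambda$ is an $m$-step partition of $n$ and $\mu$ is a partition with every part strictly greater than $m+n=m+|\lambda|$. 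It therefore suffices to biject such pairs with ordinary partitions of the same total size.

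The bijection I would propose is greedy. Given $\nu=(\nu_1\le \nu_2\le \cdots\le \nu_r)$, let $j^\ast$ be the largest index (possibly $0$) such that $\nu_i\le m+\nu_1+\cdots+\nu_{i-1}$ holds for every $i=1,\ldots,j^\ast$, and put $\lambda=(\nu_1,\ldots,\nu_{j^\ast})$ and $\mu=(\nu_{j^\ast+1},\ldots,\nu_r)$. Then $\lambda$ is $m$-step by construction; the failure of the $m$-step inequality at position $j^\ast+1$ (when $j^\ast<r$) yields $\nu_{j^\ast+1}>m+|\lambda|$; and since $\nu$ is nondecreasing, every part of $\mu$ then exceeds $m+|\lambda|$, as required.

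For the inverse, given $(\lambda,\mu)$ I would concatenate the parts of $\lambda$ followed by the parts of $\mu$, each listed in nondecreasing order. Since every part of $\lambda$ is at most $m+|\lambda|$ while every part of $\mu$ strictly exceeds that bound, the concatenation is a valid nondecreasing sequence, hence a partition $\nu$. Applying the greedy algorithm to this $\nu$ terminates at exactly $\ell(\lambda)$: up through position $\ell(\lambda)$ the $m$-step inequalities hold verbatim, and the next part, being the smallest part of $\mu$, violates the $m$-step inequality by the defining constraint on $\mu$.

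The main subtlety, in my view, is justifying that the greedy split is forced. If a part $\nu_i$ happened to satisfy the $m$-step inequality at step $i$ but were nevertheless assigned to $\mu$, then $\nu_i\le m+|\lambda_{\text{final}}|$ would contradict the constraint on $\mu$; conversely, once the $m$-step inequality fails at some position, the running $\lambda$-sum freezes while the $\nu_i$'s only grow, so no later part can qualify for $\lambda$ either. With these monotonicity observations in place, comparing coefficients of $q^N$ on both sides of the reformulated identity establishes the theorem.
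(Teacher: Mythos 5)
Your proposal is correct, and its combinatorial core --- splitting an arbitrary partition $\nu$ into its maximal $m$-step prefix $\lambda$ and a tail $\mu$ all of whose parts exceed $m+|\lambda|$ --- is exactly the decomposition the paper uses. The paper, however, carries out this decomposition in the general setting of compositions in $\sC_g^{(s)}$ to prove the identity \eqref{eq:K-ind}, $\sum_{n\ge 0}K_g^{(s)}(n,m)q^n P_{g,\le n+m}^{(s)}(-1,q)=1$; to evaluate the generating function of the tail pieces it must invoke Theorem \ref{th:G} (itself proved by a sign-reversing involution), and to recover Theorem \ref{th:ABH1} it then specializes to $g=s=1$ and applies Euler's second summation to identify $P_{1,\le m}^{(1)}(-1,q)=(q;q)_m$. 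Your route short-circuits both of those steps: in the partition case the tail generating function is visibly $q^n/(q^{m+n+1};q)_\infty$, and the conversion to $(q;q)_{m+n}$ is the trivial identity $(q;q)_\infty/(q^{m+n+1};q)_\infty=(q;q)_{m+n}$, so you get a self-contained, elementary proof of the special case at the cost of not yielding the paper's generalization. All the details you flag (the $j^\ast=0$ case, monotonicity across the junction when concatenating, and the forcedness of the greedy split, which is what makes the two maps mutually inverse) are handled correctly.
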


This relation then leads to a more surprising result for two matrices determined by partitions. First, let $M(n,m)$ be defined by
\begin{equation}
\sum_{m,n\ge 1}M(n,m)x^m q^n = \sum_{\lambda} (-1)^{\ell(\lambda)+1}x^{\lambda_{\ell(\lambda)}}q^{|\lambda|},
\end{equation}
where the summation is over nonempty partitions into distinct parts; this is motivated by Schneider's partition-theoretic M{\"o}bius function \cite[Definition 3.1]{Sch2017}. Andrews, Beck and Hopkins then introduced two infinite lower-triangular matrices:
\begin{enumerate}[label={\textup{(\roman*).}},leftmargin=*,labelsep=0cm,align=left]
	\item $\mu$ is defined by $(\mu)_{i,j\ge 1}:=M(i+1,j+1)$;
	\item $\gamma$ is defined by $(\gamma)_{i,j\ge 1}:=K(i-j,j)$.
\end{enumerate}

\begin{theorem}[{\cite[Theorem 10]{ABH2020}}]\label{th:ABH2}
	$\mu$ and $\gamma$ are inverses of one another.
\end{theorem}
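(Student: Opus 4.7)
The plan is to reduce the matrix identity to a single application of Theorem~\ref{th:ABH1}, by first simplifying $M(n,m)$ into a clean $q$-series formula and then recognizing the resulting matrix product as that very theorem in disguise. Both $\mu$ and $\gamma$ will turn out to be lower triangular with unit diagonal, so establishing $\mu\gamma=I$ is enough to conclude that $\mu$ and $\gamma$ are two-sided inverses.

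The first task is to simplify $M(n,m)$ by sorting the distinct partitions in its definition according to their largest part. Writing a nonempty distinct partition with $\lambda_{\ell(\lambda)}=m$ as $\lambda'\cup(m)$, where $\lambda'$ is a (possibly empty) distinct partition into parts in $\{1,\dots,m-1\}$, the sign $(-1)^{\ell(\lambda)+1}$ becomes $(-1)^{\ell(\lambda')}$, and one obtains
$$\sum_{n\ge 1}M(n,m)\,q^n \;=\; q^m\sum_{\lambda'}(-1)^{\ell(\lambda')}q^{|\lambda'|} \;=\; q^m(q;q)_{m-1}.$$
Thus $M(n,m)=[q^{n-m}](q;q)_{m-1}$, and in particular $\mu_{i,j}=[q^{i-j}](q;q)_j$. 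This formula makes transparent that $\mu$ is lower triangular with $\mu_{i,i}=1$; the corresponding properties of $\gamma$ follow from the fact that the empty partition is vacuously $j$-step, so $\gamma_{j,j}=K(0,j)=1$.

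Next I would compute the entries of $\mu\gamma$. For $i\ge k\ge 1$, substituting $r=j-k$ and using the formula for $\mu_{i,j}$ yields
$$(\mu\gamma)_{i,k}\;=\;\sum_{j=k}^{i}M(i+1,j+1)\,K(j-k,k)\;=\;\sum_{r=0}^{i-k}\bigl[q^{(i-k)-r}\bigr](q;q)_{k+r}\cdot K(r,k).$$
Setting $N=i-k$ and forming the generating function $\sum_{N\ge 0}(\mu\gamma)_{k+N,k}\,q^N$, a routine swap of summations converts the desired identity $(\mu\gamma)_{i,k}=\delta_{i,k}$ into
$$\sum_{r\ge 0}K(r,k)\,q^r\,(q;q)_{k+r}\;=\;1,$$
which is precisely Theorem~\ref{th:ABH1} with $m=k$. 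Since $\mu$ and $\gamma$ are lower triangular with unit diagonal, this one-sided identity yields $\gamma=\mu^{-1}$.

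The only step requiring any insight is the first—observing that the mixed generating function for $M(n,m)$ collapses to $q^m(q;q)_{m-1}$ once distinct partitions are stratified by their largest part. After that, the matrix product unfolds into a generating-function identity matching Theorem~\ref{th:ABH1} verbatim, so no delicate analysis is needed.
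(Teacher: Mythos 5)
Your proof is correct and follows essentially the same route as the paper: the paper establishes the general Theorem \ref{th:mu-gamma-ext} by stratifying the partitions defining $M_g^{(s)}$ according to their largest part (yielding $P_{g,m}^{(s)}(q)=q^mP_{g,\le m-g}^{(s)}(-1,q)$, which specializes to your $q^m(q;q)_{m-1}$ via Euler's summation), and then collapses the matrix product to the identity \eqref{eq:K-ind}, whose $g=s=1$ case is Theorem \ref{th:ABH1}. Your argument is exactly this computation specialized to $g=s=1$.
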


Observing that partitions into distinct parts are partitions where the gap between each pair of consecutive parts is at least $1$, we extend the partition function $M$ to count partitions where the gap between each pair of consecutive parts is at least $g$.

\begin{definition}\label{def:F}
	Let $g$ be a nonnegative integer. Let $\sP_g$ denote the set of partitions $\lambda=(\lambda_1,\lambda_2,\ldots,\lambda_{\ell(\lambda)})$ such that $\lambda_{i+1}-\lambda_{i}\ge g$ for all $1\le i\le \ell(\lambda)-1$.
\end{definition}

We define $M_g(n,m)$ by
\begin{equation}
\sum_{m,n\ge 1}M_g(n,m)x^m q^n = \sum_{\lambda\in\sP_g\,(\lambda\ne\emptyset)} (-1)^{\ell(\lambda)+1}x^{\lambda_{\ell(\lambda)}}q^{|\lambda|}.
\end{equation}

For the counterpart of the partition function $K$, we need to go beyond partitions to compositions.

A \textit{composition} of a natural number $n$ is a sequence of positive integers whose sum equals $n$. For any composition $\kappa$, we denote by $|\kappa|$ the \textit{size} of $\kappa$, and by $\ell(\kappa)$ the \textit{length} of $\kappa$. We write $\kappa$ as $(\kappa_1,\kappa_2,\ldots,\kappa_{\ell(\kappa)})$.

As with partitions, we consider compositions with bounded gaps between consecutive parts.

\begin{definition}\label{def:G}
	Let $g$ be a nonnegative integer. Let $\sC_g$ denote the set of compositions $\kappa=(\kappa_1,\kappa_2,\ldots,\kappa_{\ell(\kappa)})$ such that $\kappa_{i+1}-\kappa_{i}\ge -(g-1)$ for all $1\le i\le \ell(\kappa)-1$.
\end{definition}

The following fascinating result is due to Andrews \cite{And1981}. The case $g=2$ was also considered by Jovovic and Zeilberger; the involution of the latter is generalized in the proof of Theorem \ref{th:G}. See \cite[A003116]{OEIS}.
\begin{theorem}[{\cite[Theorem 1]{And1981}}]
	For each nonnegative integer $g$,
	\begin{equation}\label{eq:G-toy}
	\sum_{\kappa\in\sC_g}x^{\ell(\kappa)}q^{|\kappa|} = \dfrac{1}{\displaystyle  \sum_{\ell\ge 0}\frac{(-x)^\ell q^{\ell+\binom{\ell}{2}g}}{(q;q)_\ell}}.
	\end{equation}
\end{theorem}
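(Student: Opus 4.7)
The plan is to prove the equivalent identity $F(x,q)\,D(x,q)=1$, where $F(x,q):=\sum_{\kappa\in\sC_g}x^{\ell(\kappa)}q^{|\kappa|}$ is the left-hand side of \eqref{eq:G-toy} and $D(x,q)$ is the denominator on the right, by constructing a sign-reversing involution on pairs $(\kappa,\mu)\in\sC_g\times\sP_g$. The first step is to reinterpret $D(x,q)$ combinatorially: every $\mu=(\mu_1,\ldots,\mu_\ell)\in\sP_g$ can be written uniquely as $\mu_i=1+(i-1)g+\rho_i$ for a partition $\rho$ with at most $\ell$ parts, giving $|\mu|=\ell+g\binom{\ell}{2}+|\rho|$ and hence, since $\sum_\rho q^{|\rho|}=1/(q;q)_\ell$,
\[
D(x,q)=\sum_{\ell\ge 0}\frac{(-x)^\ell q^{\ell+\binom{\ell}{2}g}}{(q;q)_\ell}=\sum_{\mu\in\sP_g}(-x)^{\ell(\mu)}q^{|\mu|},
\]
with the empty partition contributing $1$. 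Therefore
\[
F(x,q)\,D(x,q)=\sum_{(\kappa,\mu)\in\sC_g\times\sP_g}(-1)^{\ell(\mu)}x^{\ell(\kappa)+\ell(\mu)}q^{|\kappa|+|\mu|},
\]
so the theorem amounts to showing that every pair $(\kappa,\mu)\neq(\emptyset,\emptyset)$ has its contribution cancelled.

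I would then define the involution $\iota$ on nontrivial pairs by comparing the first parts $\kappa_1$ and $\mu_1$, adopting the conventions $\kappa_1=+\infty$ if $\kappa=\emptyset$ and $\mu_1=+\infty$ if $\mu=\emptyset$. Set
\[
\iota(\kappa,\mu)=\begin{cases}\bigl((\kappa_2,\ldots,\kappa_{\ell(\kappa)}),\,(\kappa_1,\mu_1,\ldots,\mu_{\ell(\mu)})\bigr)&\text{if }\mu_1-\kappa_1\ge g,\\[2pt]\bigl((\mu_1,\kappa_1,\ldots,\kappa_{\ell(\kappa)}),\,(\mu_2,\ldots,\mu_{\ell(\mu)})\bigr)&\text{if }\mu_1-\kappa_1\le g-1.\end{cases}
\]
Because $g$ is an integer and $\mu_1-\kappa_1$ takes values in $\mathbb{Z}\cup\{\pm\infty\}$, the two cases are complementary and exhaustive on the set of nontrivial pairs. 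Moreover the conditions are tight: $\mu_1-\kappa_1\ge g$ is exactly what makes the enlarged partition $(\kappa_1,\mu_1,\ldots)$ satisfy the $\sP_g$ gap condition, and $\mu_1-\kappa_1\le g-1$ is exactly what makes the enlarged composition $(\mu_1,\kappa_1,\ldots)$ satisfy the $\sC_g$ gap condition.

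Finally I would verify the three required properties of $\iota$: it preserves $|\kappa|+|\mu|$ and $\ell(\kappa)+\ell(\mu)$ (only one part moves between the two structures), it flips the parity of $\ell(\mu)$ and hence reverses the sign, and it is its own inverse. The last point is a short case check. After the first branch, either the new $\kappa'$ is empty (so the conventions trigger the second branch) or the new $\mu'_1-\kappa'_1=\kappa_1-\kappa_2\le g-1$ by $\kappa\in\sC_g$, and the second branch fires, returning the original pair; after the second branch, either $\mu'$ is empty or $\mu'_1-\kappa'_1=\mu_2-\mu_1\ge g$ by $\mu\in\sP_g$, and the first branch fires, returning the original pair. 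The only fixed pair is $(\emptyset,\emptyset)$, which contributes $1$. The main obstacle is choosing the right point of comparison: the natural guess of comparing $\kappa_m$ and $\mu_1$ and moving a part across the boundary between $\kappa$ and $\mu$ has unavoidable stuck configurations whenever $\kappa_m-\mu_1\ge g$, whereas comparing the front ends of both sequences turns the defining inequalities of $\sC_g$ and $\sP_g$ into exactly the complementary pair $\{\mu_1-\kappa_1\ge g\}\sqcup\{\mu_1-\kappa_1\le g-1\}$, which covers all cases and eliminates every fixed point apart from $(\emptyset,\emptyset)$.
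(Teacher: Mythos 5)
Your proof is correct, and it is in essence the same argument the paper uses for Theorem \ref{th:G} (of which \eqref{eq:G-toy} is the case $s=1$, $m=1$): expand the denominator as the signed generating function over $\sP_g$ (the paper's \eqref{eq:F-toy}, proved by the same shift $\mu_i\mapsto\mu_i-1-(i-1)g$), then cancel all pairs except $(\emptyset,\emptyset)$ with a sign-reversing involution that transfers a single part between the composition and the partition, the two complementary branches being governed by whether the gap condition of $\sP_g$ or of $\sC_g$ holds. The one genuine difference is the point of exchange: you splice parts between the \emph{front} of $\kappa$ and the \emph{smallest} part of the partition, whereas the paper's involution $\phi$ works at the \emph{back} of $\kappa$ and the \emph{largest} part of the partition (and since $\sC_g$ is not closed under reversal, these are not equivalent up to symmetry). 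Both choices yield \eqref{eq:G-toy}, but the paper's version never disturbs $\kappa_1$, so it restricts to the subset $\{\kappa_1\ge m\}$ and delivers the stronger identity \eqref{eq:Gm}, $C_{g,\ge m}^{(s)}(x,q)=P_{g,\le m-1}^{(s)}(-x,q)/P_g^{(s)}(-x,q)$, which is what the later matrix-inversion results actually require; your front-end version modifies $\kappa_1$ and so proves only the unrestricted statement. Your closing remark about ``stuck configurations'' applies to the mixed comparison of $\kappa_{\ell(\kappa)}$ against the smallest part of $\mu$, not to the paper's comparison of the two final parts, which works without fixed points other than $(\emptyset,\emptyset)$.
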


Notice also that the summation in the denominator of \eqref{eq:G-toy} is related to partitions with gaps at least $g$:
\begin{equation}\label{eq:F-toy}
\sum_{\lambda\in\sP_g}(-x)^{\ell(\lambda)}q^{|\lambda|} = \sum_{\ell\ge 0}\frac{(-x)^\ell q^{\ell+\binom{\ell}{2}g}}{(q;q)_\ell}.
\end{equation}
The above generating function identities are special cases in a broader setting. This will be discussed in Section \ref{sec:gf}.

Returning to the counterpart of the partition function $K$, let us generalize $m$-step partitions to $m$-step compositions.

\begin{definition}
	Given a nonnegative integer $m$, a composition $(\kappa_1,\kappa_2,\ldots,\kappa_{\ell(\kappa)})$ is called \textit{$m$-step} if $\kappa_j\le m+\sum_{i=0}^{j-1}\kappa_i$ for each part $\kappa_j$, with $\kappa_0=0$ for convenience.
\end{definition}

Let $K_g(n,m)$ denote the number of $m$-step compositions of $n$ in $\sC_g$.

As we have seen, $M(n,m)=M_1(n,m)$. To see that $K(n,m)=K_1(n,m)$, we notice that $\sC_1$ is the set of compositions $\kappa=(\kappa_1,\kappa_2,\ldots,\kappa_{\ell(\kappa)})$ such that $\kappa_{i+1}-\kappa_{i}\ge 0$ for all $1\le i\le \ell(\kappa)-1$. So $\sC_1$ coincides with the set of partitions.

Now we extend Theorem \ref{th:ABH2}, which is the main result of this paper.

\begin{theorem}\label{th:mu-gamma}
	For each positive integer $g$, let $\mu_g$ be the infinite lower-triangular matrix defined by $(\mu_g)_{i,j\ge 1}:=M_g(i+g,j+g)$ and let $\gamma_g$ be the infinite lower-triangular matrix defined by $(\gamma_g)_{i,j\ge 1}:=K_g(i-j,j)$. Then $\mu_g$ and $\gamma_g$ are inverses of one another.
\end{theorem}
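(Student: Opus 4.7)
The plan is to prove Theorem~\ref{th:mu-gamma} by first establishing a generalization of Theorem~\ref{th:ABH1}. Namely, for each positive integer $g$ and each positive integer $m$, I will prove the identity
$$\sum_{n \ge 0} K_g(n, m)\, q^n \sum_{\substack{\lambda \in \sP_g \\ \lambda_{\ell(\lambda)} \le n + m}} (-1)^{\ell(\lambda)} q^{|\lambda|} = 1, \quad (\star)$$
which specializes to Theorem~\ref{th:ABH1} at $g = 1$ since the inner sum is then $(q;q)_{m+n}$.

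To establish $(\star)$ I will build a sign-reversing involution $\Phi$ on pairs $(\kappa, \lambda)$, where $\kappa$ is an $m$-step composition in $\sC_g$ and $\lambda \in \sP_g$ satisfies $\lambda_{\ell(\lambda)} \le m + |\kappa|$, weighted by $(-1)^{\ell(\lambda)} q^{|\kappa|+|\lambda|}$. Write $a := \lambda_{\ell(\lambda)}$ and $b := \kappa_{\ell(\kappa)}$, with the convention that each equals $0$ when its sequence is empty. The rule for $\Phi$ is: if $\kappa$ is nonempty and either $\lambda$ is empty or $b - a \ge g$, delete $b$ from the end of $\kappa$ and append it to $\lambda$ as the new largest part (Case~B); otherwise, when $\lambda$ is nonempty, delete $a$ from $\lambda$ and append it to the end of $\kappa$ (Case~A). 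The $\sC_g$ gap condition $\kappa_j \ge \kappa_{j-1} - (g-1)$ is precisely what Case~A needs to yield a valid composition, while the $\sP_g$ gap $\ge g$ is precisely what Case~B needs for a valid partition; the bound $\lambda_{\ell(\lambda)} \le m + |\kappa|$ survives Case~B because the $m$-step inequality at the last part of $\kappa$ forces $2b \le m + |\kappa|$. That $\Phi$ is an involution reduces to two gap observations: after Case~A, $b - a = \lambda_{\ell(\lambda)} - \lambda_{\ell(\lambda)-1} \ge g$ and so the next move is Case~B, whereas after Case~B, $b - a = \kappa_{\ell(\kappa)-1} - \kappa_{\ell(\kappa)} \le g - 1$ and so the next move is Case~A. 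The only fixed point is $(\emptyset, \emptyset)$, with weight $1$, which yields $(\star)$.

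To deduce Theorem~\ref{th:mu-gamma}, I will rewrite $\sum_{r \ge 0} q^r M_g(r + j, j)$ by stripping the (forced) largest part $j$ off each partition, obtaining $\sum_{\mu \in \sP_g,\, \mu_{\ell(\mu)} \le j - g} (-1)^{\ell(\mu)} q^{|\mu|}$. Substituting $j = n + m + g$ in $(\star)$ and reorganizing the double sum by $i := r + n + m$ produces
$$\sum_{i \ge m} q^{i - m} \sum_{j = m}^{i} M_g(i + g, j + g)\, K_g(j - m, m) = 1.$$
Matching coefficients of $q^{i - m}$ gives $(\mu_g \gamma_g)_{i, m} = \delta_{i, m}$. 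Since $(\mu_g)_{i,i} = M_g(i+g, i+g) = 1$ and $(\gamma_g)_{i, i} = K_g(0, i) = 1$, both matrices are lower unitriangular, so this left-inverse identity suffices to conclude $\gamma_g = \mu_g^{-1}$. The main obstacle will be handling the boundary of $\Phi$ cleanly: when $\lambda = \emptyset$ and $\kappa$ ends in a small part with $\kappa_{\ell(\kappa)} < g$, the ``$b - a \ge g$'' comparison degenerates, and one must verify that Case~B still applies there, producing a singleton $\lambda = (\kappa_{\ell(\kappa)})$ whose inverse is Case~A via the $\sC_g$ gap inequality rather than the $\sP_g$ one.
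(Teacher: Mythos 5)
Your proposal is correct, and its second half (unitriangularity plus the computation that turns the orthogonality identity $(\star)$ into $(\mu_g\gamma_g)_{i,m}=\delta_{i,m}$ by stripping the forced largest part off the partitions counted by $M_g$) is essentially the computation in the paper's proof of Theorem~\ref{th:mu-gamma-ext}. Where you genuinely diverge is in how you establish $(\star)$, which is the $s=1$ case of the paper's identity \eqref{eq:K-ind}. The paper factors this into two steps: first the reciprocity $C_{g,\ge m}^{(s)}(x,q)=P_{g,\le m-1}^{(s)}(-x,q)/P_g^{(s)}(-x,q)$ (Theorem~\ref{th:G}, proved by a transfer-the-last-part involution on $\sP_g^{(s)}\times\sC_g^{(s)}$), and second a unique decomposition of an arbitrary composition in $\sC_g^{(s)}$ into a maximal $m$-step prefix $\kappa'$ and a tail whose first part is at least $|\kappa'|+m+1$. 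You instead prove $(\star)$ in one shot by a sign-reversing involution directly on pairs consisting of an $m$-step composition $\kappa$ and a partition $\lambda\in\sP_g$ with $\lambda_{\ell(\lambda)}\le m+|\kappa|$; your involution is of the same ``compare last parts and move one across'' type as the paper's $\phi$, but lives on a different set of pairs, and the one nonobvious closure check --- that the bound $\lambda_{\ell(\lambda)}\le m+|\kappa|$ survives moving $b=\kappa_{\ell(\kappa)}$ into $\lambda$ --- is exactly supplied by the $m$-step inequality $2b\le m+|\kappa|$, as you note. I verified the case analysis, including the boundary where $\lambda=\emptyset$ and $\kappa$ ends in a part smaller than $g$: Case~B still fires there and its inverse is Case~A via the $\sC_g$ gap, so $(\emptyset,\emptyset)$ is the unique fixed point. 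What each approach buys: yours is more self-contained and avoids the generating-function formalism entirely; the paper's factorization costs an extra lemma but yields Theorem~\ref{th:G} (a generalization of Andrews' reciprocal theorem, with the extra parameters $x$ and $s$) as a result of independent interest, and the parameter $s$ is needed for the full Theorem~\ref{th:mu-gamma-ext}, of which the stated theorem is only the $s=1$ case.
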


The case $g=2$ is presented in Example \ref{ex:g=2}. This result is the $s=1$ case of Theorem \ref{th:mu-gamma-ext}, whose proof is established in Section \ref{sec:mat} along with a generalization of Theorem \ref{th:ABH1}.

Throughout, we adopt the usual notation: for $n\in\mathbb{N}\cup\{\infty\}$, the \textit{$q$-Pochhammer symbol} is defined by
\begin{align*}
(a;q)_n:=\prod_{k=0}^{n-1}(1-a q^k).
\end{align*}
Also, the \textit{$q$-binomial coefficient} is defined by
\begin{align*}
\qbinom{A}{B}_q:=\begin{cases}
\dfrac{(q;q)_A}{(q;q)_B(q;q)_{A-B}} & \text{if $0\le B\le A$},\\[12pt]
0 & \text{otherwise}.
\end{cases}
\end{align*}
Further, for any formal power series $f(q)$, we denote by $[q^n]f(q)$ the coefficient of $q^n$ in its expansion.

\section{Generating functions}\label{sec:gf}

We first refine the sets $\sP_g$ and $\sC_g$. Throughout, let $s$ be a positive integer.

\begin{definition}\label{def:refine}
	Let $\sP_g^{(s)}$ denote the set of partitions in $\sP_g$ such that all parts are at least $s$. Let $\sC_g^{(s)}$ denote the set of compositions in $\sC_g$ such that all parts are at least $s$.
\end{definition}

\begin{proposition}\label{prop:F}
	Let $g$ and $m$ be nonnegative integers and $s$ be a positive integer. Let $\sP_g^{(s)}$ be as in Definition \ref{def:refine} and let $\sP_{g,\le m}^{(s)}$ denote the set of partitions in $\sP_g^{(s)}$ such that $\lambda_{\ell(\lambda)}\le m$. Let
	$$P_g^{(s)}(x,q):=\sum_{\lambda\in\sP_g^{(s)}}x^{\ell(\lambda)}q^{|\lambda|}$$
	and
	$$P_{g,\le m}^{(s)}(x,q):=\sum_{\lambda\in\sP_{g,\le m}^{(s)}}x^{\ell(\lambda)}q^{|\lambda|}.$$
	Then
	\begin{equation}\label{eq:F}
	P_g^{(s)}(x,q)=\sum_{\ell\ge 0}\frac{x^\ell q^{\ell s+\binom{\ell}{2}g}}{(q;q)_\ell}
	\end{equation}
	and
	\begin{equation}\label{eq:Fm}
	P_{g,\le m}^{(s)}(x,q)=1+\sum_{\ell\ge 1}x^\ell q^{\ell s+\binom{\ell}{2}g}\qbinom{m-s+1-(\ell-1)(g-1)}{\ell}_q.
	\end{equation}
\end{proposition}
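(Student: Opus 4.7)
The plan is to reduce both generating function identities to well-known formulas for the $q$-binomial coefficient and the series $1/(q;q)_\ell$ via a standard ``shift'' bijection that removes the forced gaps. I would first fix the length $\ell\ge 1$ and count partitions in $\sP_g^{(s)}$ with exactly $\ell$ parts separately; summing over $\ell$ (including the empty partition, which contributes $1$) then gives the full generating function.

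The key step is the substitution
\[
\mu_i := \lambda_i - s - (i-1)g \qquad (1\le i\le \ell).
\]
Because $\lambda_1\ge s$ and $\lambda_{i+1}-\lambda_i\ge g$, the conditions $\lambda_1\le\cdots\le\lambda_\ell$ translate exactly to $0\le \mu_1\le \mu_2\le\cdots\le\mu_\ell$, i.e.\ $(\mu_1,\dots,\mu_\ell)$ is an ordinary (weakly increasing) partition with at most $\ell$ parts. Moreover,
\[
|\lambda| \;=\; \sum_{i=1}^{\ell}\bigl(\mu_i+s+(i-1)g\bigr) \;=\; |\mu| + \ell s + \binom{\ell}{2}g,
\]
so the factor $q^{\ell s + \binom{\ell}{2}g}$ is produced automatically and the remaining weight is the generating function for $\mu$. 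For \eqref{eq:F}, there is no upper bound on $\mu_\ell$, so summing over partitions with at most $\ell$ parts contributes $1/(q;q)_\ell$; summing on $\ell$ yields the right-hand side.

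For \eqref{eq:Fm}, the extra constraint $\lambda_\ell\le m$ is equivalent to
\[
\mu_\ell \le m - s - (\ell-1)g.
\]
The generating function for partitions with at most $\ell$ parts, each at most $N$, is $\qbinom{N+\ell}{\ell}_q$. Taking $N = m-s-(\ell-1)g$ gives
\[
\qbinom{m-s-(\ell-1)g+\ell}{\ell}_q = \qbinom{m-s+1-(\ell-1)(g-1)}{\ell}_q,
\]
which is exactly the factor appearing in \eqref{eq:Fm}. Summing over $\ell\ge 1$ and adding $1$ for the empty partition (the $\ell=0$ term) completes the proof. I do not expect any genuine obstacle: the substitution is a bijection onto unrestricted partitions with at most $\ell$ parts (or with at most $\ell$ parts each bounded above), and the cited generating functions are standard.
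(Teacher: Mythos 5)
Your proof is correct and uses essentially the same argument as the paper: the shift $\mu_i=\lambda_i-s-(i-1)g$, which the paper also applies (stating only that the image is a partition with at most $\ell$ parts and largest part at most $m-s-(\ell-1)g$). You simply fill in the routine details, including the index rewriting $m-s-(\ell-1)g+\ell=m-s+1-(\ell-1)(g-1)$, all of which checks out.
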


\begin{proof}
	We prove \eqref{eq:Fm}; then \eqref{eq:F} follows by letting $m\to\infty$ in \eqref{eq:Fm}. For \eqref{eq:Fm}, we notice that if $(\lambda_1,\lambda_2,\ldots,\lambda_{\ell(\lambda)})$ is in $\sP_{g,\le m}^{(s)}$, then $(\lambda_1-s,\lambda_2-s-g,\ldots, \lambda_{\ell(\lambda)}-s-(\ell-1)g)$ gives a partition with at most $\ell(\lambda)$ parts and largest part not exceeding $m-s-(\ell-1)g$.
\end{proof}

\begin{theorem}\label{th:G}
	Let $g$ be a nonnegative integer and $s$ be a positive integer. Let $\sC_g^{(s)}$ be as in Definition \ref{def:refine} and let $\sC_{g,\ge m}^{(s)}$ denote the set of compositions in $\sC_g^{(s)}$ such that $\kappa_{1}\ge m$. Let
	$$C_g^{(s)}(x,q):=\sum_{\kappa\in\sC_g^{(s)}}x^{\ell(\kappa)}q^{|\kappa|}$$
	and
	$$C_{g,\ge m}^{(s)}(x,q):=\sum_{\kappa\in\sC_{g,\ge m}^{(s)}}x^{\ell(\kappa)}q^{|\kappa|}.$$
	Then for each positive integer $m$,
	\begin{equation}\label{eq:Gm}
	C_{g,\ge m}^{(s)}(x,q)=\frac{P_{g,\le {m-1}}^{(s)}(-x,q)}{P_g^{(s)}(-x,q)}
	\end{equation}
	and in particular,
	\begin{equation}\label{eq:G}
	C_g^{(s)}(x,q)=\frac{1}{P_g^{(s)}(-x,q)}.
	\end{equation}
\end{theorem}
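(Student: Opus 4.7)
My plan is to prove the more general identity \eqref{eq:Gm}; the identity \eqref{eq:G} is the specialization $m=1$, where $\sC_{g,\ge 1}^{(s)}=\sC_g^{(s)}$ (since all parts are already at least $s\ge 1$) and $\sP_{g,\le 0}^{(s)}=\{\emptyset\}$. After clearing the denominator, \eqref{eq:Gm} becomes the bilinear identity
\[
C_{g,\ge m}^{(s)}(x,q)\cdot P_g^{(s)}(-x,q)=P_{g,\le m-1}^{(s)}(-x,q),
\]
whose left-hand side is the generating function for pairs $(\kappa,\lambda)\in\sC_{g,\ge m}^{(s)}\times\sP_g^{(s)}$ weighted by $x^{\ell(\kappa)+\ell(\lambda)}(-1)^{\ell(\lambda)}q^{|\kappa|+|\lambda|}$. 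I will construct a weight-preserving, sign-reversing involution on these pairs whose fixed-point set is exactly $\{\emptyset\}\times\sP_{g,\le m-1}^{(s)}$, generalizing the Jovovic--Zeilberger involution at $g=2$.

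The involution will act at the right endpoints. Writing $a:=\kappa_{\ell(\kappa)}$ and $b:=\lambda_{\ell(\lambda)}$ when those parts exist, I consider two candidate moves. Move $\mathrm{(E)}$ strips the last part $a$ from $\kappa$ and appends it as the new largest part of $\lambda$; it is legal when $\lambda=\emptyset$ or $a-b\ge g$. Move $\mathrm{(F)}$ strips $b$ from $\lambda$ and appends it as the new last part of $\kappa$; it is legal when either $\kappa\ne\emptyset$ and $a-b\le g-1$, or $\kappa=\emptyset$ and $b\ge m$. Neither move alters $\kappa_1$ as long as $\kappa$ remains non-empty, and when $\kappa$ starts empty and $\mathrm{(F)}$ is performed the new first part of $\kappa$ is $b\ge m$ by hypothesis, so the constraint $\kappa_1\ge m$ defining $\sC_{g,\ge m}^{(s)}$ is always preserved. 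Each move changes $\ell(\lambda)$ by $\pm 1$ while preserving $\ell(\kappa)+\ell(\lambda)$ and $|\kappa|+|\lambda|$, so the sign of the contribution is flipped.

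The verification splits into cases according to which of $\kappa,\lambda$ are empty. When both are non-empty, $a-b\ge g$ and $a-b\le g-1$ are complementary, so exactly one move is available; after $\mathrm{(E)}$ the new right-endpoint difference is $\kappa_{\ell(\kappa)-1}-\kappa_{\ell(\kappa)}\le g-1$ by the composition condition $\kappa_{i+1}-\kappa_i\ge-(g-1)$ (or the new $\kappa$ is empty and $b=\kappa_1\ge m$), so the inverse rule selects $\mathrm{(F)}$; after $\mathrm{(F)}$ the new difference is $\lambda_{\ell(\lambda)}-\lambda_{\ell(\lambda)-1}\ge g$ by the partition condition (or the new $\lambda$ is empty), so the inverse rule selects $\mathrm{(E)}$. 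If $\lambda=\emptyset$ and $\kappa\ne\emptyset$ only $\mathrm{(E)}$ applies, and it is always legal; if $\kappa=\emptyset$ and $\lambda\ne\emptyset$ only $\mathrm{(F)}$ applies, and it is legal precisely when $b\ge m$. The fixed points are therefore the pairs $(\emptyset,\lambda)$ for which either $\lambda=\emptyset$ or $b<m$; since $\lambda$ is nondecreasing, $b<m$ means every part of $\lambda$ is at most $m-1$, so the fixed points are exactly $\{\emptyset\}\times\sP_{g,\le m-1}^{(s)}$, whose weights sum to $P_{g,\le m-1}^{(s)}(-x,q)$.

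The main obstacle is the case analysis needed to verify that the two moves are mutually inverse, particularly the edge cases where a move empties one side of the pair, and the check that the $\kappa_1\ge m$ constraint is neither needed nor broken in any non-fixed case. This is precisely what dictates that the involution must act on the right endpoints rather than on the left: the more obvious left-endpoint variant, which would move $\kappa_1$ into $\lambda$, would destroy the first-part constraint as soon as $\kappa_2<m$ and so fails to yield an involution.
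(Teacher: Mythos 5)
Your proposal is correct and is essentially the paper's own proof: the authors also establish \eqref{eq:Gm} via a sign-reversing involution that transfers the last part between $\kappa$ and $\lambda$ according to whether $\kappa_{\ell(\kappa)}-\lambda_{\ell(\lambda)}\ge g$, with fixed points $(\emptyset,\lambda)$ for $\lambda\in\sP_{g,\le m-1}^{(s)}$. The only cosmetic difference is that the paper defines the involution on all pairs and then restricts to the subset with $\kappa_1\ge m$, whereas you build that constraint into the legality conditions of your two moves.
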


\begin{proof}
	We first introduce a set of pairs $\Pi=\Pi^{(s)}:=\{(\lambda,\kappa)\}$ where
	\begin{align*}
	\left\{
	\begin{aligned}
	\lambda&=(\lambda_1,\lambda_2,\ldots,\lambda_{\ell(\lambda)})\in\sP_g^{(s)},\\
	\kappa&=(\kappa_1,\kappa_2,\ldots,\kappa_{\ell(\kappa)})\in\sC_g^{(s)}.
	\end{aligned}
	\right.
	\end{align*}
	Then we have an involution $\phi$ on $\Pi$ given by $\phi(\lambda,\kappa)\mapsto (\phi(\lambda),\phi(\kappa))$ where if $\lambda$ and $\kappa$ are nonempty and
	\begin{enumerate}[label={\textup{(\roman*).}},leftmargin=*,labelsep=0cm,align=left]
		\item $\kappa_{\ell(\kappa)}-\lambda_{\ell(\lambda)}\ge g$,
		\begin{equation*}
		\left\{
		\begin{aligned}
		\phi(\lambda) &= (\lambda_1,\lambda_2,\ldots,\lambda_{\ell(\lambda)},\kappa_{\ell(\kappa)}),\\
		\phi(\kappa) &= (\kappa_1,\kappa_2,\ldots,\kappa_{\ell(\kappa)-1});
		\end{aligned}
		\right.
		\end{equation*}
		
		\item $\kappa_{\ell(\kappa)}-\lambda_{\ell(\lambda)}\le g-1$,
		\begin{equation*}
		\left\{
		\begin{aligned}
		\phi(\lambda) &= (\lambda_1,\lambda_2,\ldots,\lambda_{\ell(\lambda)-1}),\\
		\phi(\kappa) &= (\kappa_1,\kappa_2,\ldots,\kappa_{\ell(\kappa)},\lambda_{\ell(\lambda)});
		\end{aligned}
		\right.
		\end{equation*}
	\end{enumerate}
	and
	\begin{align*}
	\phi(\emptyset,\emptyset)&=(\emptyset,\emptyset),\\
	\phi((\lambda_1,\lambda_2,\ldots,\lambda_{\ell(\lambda)}),\emptyset)&=((\lambda_1,\lambda_2,\ldots,\lambda_{\ell(\lambda)-1}),(\lambda_{\ell(\lambda)})),\\
	\phi(\emptyset,(\kappa_1,\kappa_2,\ldots,\kappa_{\ell(\kappa)}))&=((\kappa_{\ell(\kappa)}),(\kappa_1,\kappa_2,\ldots,\kappa_{\ell(\kappa)-1})).
	\end{align*}
	It is straightforward to verify that for any $(\lambda,\kappa)\in \Pi$, we have $\phi(\lambda,\kappa)\in \Pi$, $\phi\circ \phi(\lambda,\kappa) = (\lambda,\kappa)$, $|\lambda|+|\kappa|=|\phi(\lambda)|+|\phi(\kappa)|$ and $\ell(\lambda)+\ell(\kappa)=\ell(\phi(\lambda))+\ell(\phi(\kappa))$. Further, if we assign a weight $w$ to each $(\lambda,\kappa)$ by $w(\lambda,\kappa) = (-1)^{\ell(\lambda)}$, then $w(\phi(\lambda),\phi(\kappa))=-w(\lambda,\kappa)$ with the only exception that $w(\phi(\emptyset),\phi(\emptyset))=w(\emptyset,\emptyset)=1$.
	
	Now, for $m\ge 1$, we denote by $\Pi_{m}$ the subset of $\Pi$ such that $\kappa_1\ge m$ or $\kappa=\emptyset$. If we further denote by $\Pi_m^*$ the subset of $\Pi_{m}$ such that $(\lambda,\kappa)\in \Pi_m^*$ is not of the form $(\lambda,\emptyset)$ with $\lambda_{\ell(\lambda)}\le m-1$ or $\lambda=\emptyset$, then $\phi$ is also an involution on $\Pi_m^*$. Since $(\emptyset,\emptyset)\not\in \Pi_m^*$, we have
	\begin{align*}
	&\sum_{(\lambda,\kappa)\in \Pi_m^*}w(\lambda,\kappa)x^{\ell(\lambda)+\ell(\kappa)}q^{|\lambda|+|\kappa|}\\
	&\qquad= \sum_{(\lambda,\kappa)\in \Pi_m^*}w(\phi(\lambda),\phi(\kappa))x^{\ell(\phi(\lambda))+\ell(\phi(\kappa))}q^{|\phi(\lambda)|+|\phi(\kappa)|}\\
	&\qquad= - \sum_{(\lambda,\kappa)\in \Pi_m^*}w(\lambda,\kappa)x^{\ell(\lambda)+\ell(\kappa)}q^{|\lambda|+|\kappa|},
	\end{align*}
	and therefore,
	\begin{align*}
	\sum_{(\lambda,\kappa)\in \Pi_m^*}w(\lambda,\kappa)x^{\ell(\lambda)+\ell(\kappa)}q^{|\lambda|+|\kappa|}= 0.
	\end{align*}
	Recall also that $\Pi_m\backslash \Pi_m^* = \sP_{g,\le m-1}^{(s)} \times \{\emptyset\}$. To see \eqref{eq:Gm}, 
	\begin{align*}
	P_g^{(s)}(-x,q)C_{g,\ge m}^{(s)}(x,q)&=\sum_{(\lambda,\kappa)\in \Pi_m}w(\lambda,\kappa)x^{\ell(\lambda)+\ell(\kappa)}q^{|\lambda|+|\kappa|}\\
	&=\sum_{(\lambda,\kappa)\in \Pi_m\backslash \Pi_m^*}w(\lambda,\kappa)x^{\ell(\lambda)+\ell(\kappa)}q^{|\lambda|+|\kappa|}\\
	&=\sum_{\lambda\in\sP_{g,\le m-1}^{(s)}}(-1)^{\ell(\lambda)}x^{\ell(\lambda)}q^{|\lambda|}\\
	&=P_{g,\le {m-1}}^{(s)}(-x,q).
	\end{align*}
	
	Finally, \eqref{eq:G} follows from \eqref{eq:Gm} since $C_g^{(s)}(x,q)=C_{g,\ge 1}^{(s)}(x,q)$ and $P_{g,\le 0}^{(s)}(-x,q)=1$.
\end{proof}

\section{Matrices}\label{sec:mat}

We first extend Theorem \ref{th:ABH1}. Let $K_g^{(s)}(n,m)$ denote the number of $m$-step compositions of $n$ in $\sC_g^{(s)}$.

\begin{theorem}
	For each positive integer $m$,
	\begin{equation}\label{eq:K-ind}
	\sum_{n\ge 0}K_g^{(s)}(n,m)q^n P_{g,\le n+m}^{(s)}(-1,q)=1.
	\end{equation}
\end{theorem}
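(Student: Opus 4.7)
The plan is to first convert the identity into a cleaner bijective statement using Theorem \ref{th:G}, and then establish that statement via a splitting map on compositions. Applying \eqref{eq:Gm} with first-part threshold $n+m+1$ and using \eqref{eq:G}, we have
\[
P_{g,\le n+m}^{(s)}(-1,q) \;=\; P_g^{(s)}(-1,q) \cdot C_{g,\ge n+m+1}^{(s)}(1,q)
\qquad\text{and}\qquad
\frac{1}{P_g^{(s)}(-1,q)} \;=\; C_g^{(s)}(1,q).
\]
Dividing the target identity \eqref{eq:K-ind} by $P_g^{(s)}(-1,q)$ thus reduces it to
\[
\sum_{n \ge 0} K_g^{(s)}(n,m)\, q^n\, C_{g,\ge n+m+1}^{(s)}(1,q) \;=\; C_g^{(s)}(1,q).
\]
The right-hand side is the $q^{|\cdot|}$-generating function for all compositions in $\sC_g^{(s)}$, while the left-hand side enumerates pairs $(\tau,\sigma)$ in which $\tau \in \sC_g^{(s)}$ is an $m$-step composition of $n$ and $\sigma \in \sC_g^{(s)}$ is either empty or satisfies $\sigma_1 \ge n+m+1$.

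I would then prove this combinatorial identity via the natural splitting bijection: given $\kappa \in \sC_g^{(s)}$, let $j$ be the largest index such that $(\kappa_1,\ldots,\kappa_j)$ is an $m$-step composition, and set $\tau := (\kappa_1,\ldots,\kappa_j)$ and $\sigma := (\kappa_{j+1},\ldots,\kappa_{\ell(\kappa)})$. Maximality of $j$ forces $\sigma_1 > m + |\tau|$ whenever $\sigma$ is nonempty, i.e., $\sigma_1 \ge n+m+1$ where $n := |\tau|$. The inverse sends $(\tau,\sigma)$ to its concatenation, and the map is plainly weight-preserving in $q^{|\cdot|}$.

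The main point to verify is that concatenation lands back in $\sC_g^{(s)}$; the only non-inherited constraint is the gap at the junction, $\sigma_1 - \tau_{\ell(\tau)} \ge -(g-1)$. Since $n \ge \tau_{\ell(\tau)}$ (all parts are positive), we get
\[
\sigma_1 \;\ge\; n+m+1 \;\ge\; \tau_{\ell(\tau)}+m+1 \;\ge\; \tau_{\ell(\tau)}-(g-1),
\]
where the last inequality uses $m,g \ge 0$. Uniqueness of the split in the reverse direction follows because $\sigma_1 > m+n$ prevents the $m$-step prefix of the concatenation from extending past position $\ell(\tau)$. The edge cases $\tau=\emptyset$, $\sigma=\emptyset$, and $\kappa=\emptyset$ are all handled uniformly by the same rule. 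I do not foresee any step presenting a serious obstacle; the essential insight is the reformulation that turns the mixed partition/composition sum in \eqref{eq:K-ind} into a clean pair-counting identity about $\sC_g^{(s)}$.
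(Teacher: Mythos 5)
Your proposal is correct and follows essentially the same route as the paper: both reduce \eqref{eq:K-ind} via Theorem \ref{th:G} to the identity $\sum_{n\ge 0}K_g^{(s)}(n,m)q^nC_{g,\ge n+m+1}^{(s)}(1,q)=C_g^{(s)}(1,q)$ and prove it by uniquely splitting each $\kappa\in\sC_g^{(s)}$ into its maximal $m$-step prefix and the remaining tail. Your write-up simply makes explicit the verifications (maximality forcing $\sigma_1>m+|\tau|$, the gap condition at the junction) that the paper leaves to the reader.
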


\begin{proof}
	For fixed $m$, any composition $\kappa=(\kappa_1,\kappa_2,\ldots,\kappa_{\ell(\kappa)})$ in $\sC_g^{(s)}$ can be uniquely decomposed as
	\begin{equation*}
	\left\{
	\begin{aligned}
	\kappa'&=(\kappa_1,\kappa_2,\ldots,\kappa_{M}),\\
	\kappa''&=(\kappa_{M+1},\kappa_{M+2},\ldots,\kappa_{\ell(\kappa)}),
	\end{aligned}
	\right.
	\end{equation*}
	where $\kappa'\in \sC_g^{(s)}$ is $m$-step and $\kappa''\in\sC_g^{(s)}$ satisfies $\kappa_{M+1}\ge |\kappa'|+m+1$. Therefore,
	\begin{align*}
	\sum_{\kappa\in\sC_g^{(s)}}q^{|\kappa|}=\sum_{\substack{\kappa'\in\sC_g^{(s)}\\\text{$\kappa'$ is $m$-step}}}q^{|\kappa'|}\sum_{\substack{\kappa''\in\sC_g^{(s)}\\\kappa''_1\ge |\kappa'|+m+1}}q^{|\kappa''|}.
	\end{align*}
	Namely,
	\begin{align*}
	\frac{1}{P_g^{(s)}(-1,q)}=C_g^{(s)}(1,q)&=\sum_{n\ge 0}K_g^{(s)}(n,m)q^n C_{g,\ge n+m+1}^{(s)}(1,q)\\
	&=\sum_{n\ge 0}K_g^{(s)}(n,m)q^n \frac{P_{g,\le {m+n}}^{(s)}(-1,q)}{P_g^{(s)}(-1,q)},
	\end{align*}
	where we make use of Theorem \ref{th:G}. The desired result therefore follows.
\end{proof}

\begin{example}
	Taking $g=1$ and $s=1$ in \eqref{eq:Fm} gives
	$$P_{1,\le m}^{(1)}(-1,q)=1+\sum_{\ell\ge 1}(-1)^\ell q^{\ell+\binom{\ell}{2}}\qbinom{m}{\ell}_q=(q;q)_m,$$
	where we use Euler's second summation \cite[(3.3.6)]{And1976}. Recall that $K_1^{(1)}(n,m)$ counts \textit{the number of $m$-step partitions of $n$}. Then the $g=1$ and $s=1$ case of \eqref{eq:K-ind} yields
	\begin{equation}
	\sum_{n\ge 0}K_1^{(1)}(n,m)q^n (q;q)_{m+n}=1.
	\end{equation}
	This is Theorem \ref{th:ABH1}. Also, taking $g=0$ and $s=1$ in \eqref{eq:Fm} gives
	$$P_{0,\le m}^{(1)}(-1,q)=1+\sum_{\ell\ge 1}(-1)^\ell q^{\ell}\qbinom{m+\ell-1}{\ell}_q=\frac{1}{(-q;q)_m},$$
	where we use Euler's first summation \cite[(3.3.7)]{And1976}. Further, $K_0^{(1)}(n,m)$ counts \textit{the number of $m$-step partitions of $n$ into distinct parts}. Therefore, by \eqref{eq:K-ind},
	\begin{equation}
	\sum_{n\ge 0}\frac{K_0^{(1)}(n,m)q^n}{(-q;q)_{m+n}}=1.
	\end{equation}
\end{example}

Now we are ready to present a refinement of Theorem \ref{th:mu-gamma}. Let $M_g^{(s)}(n,m)$ be defined by
\begin{equation}
\sum_{m,n\ge 1}M_g^{(s)}(n,m)x^m q^n = \sum_{\lambda\in\sP_g^{(s)}\,(\lambda\ne\emptyset)} (-1)^{\ell(\lambda)+1}x^{\lambda_{\ell(\lambda)}}q^{|\lambda|}.
\end{equation}

\begin{theorem}\label{th:mu-gamma-ext}
	For positive integers $g$ and $s$, let $\mu_g^{(s)}$ be the infinite lower-triangular matrix defined by $(\mu_g^{(s)})_{i,j\ge 1}:=M_g^{(s)}(i+g+s-1,j+g+s-1)$ and let $\gamma_g^{(s)}$ be the infinite lower-triangular matrix defined by $(\gamma_g^{(s)})_{i,j\ge 1}:=K_g^{(s)}(i-j,j+s-1)$. Then $\mu_g^{(s)}$ and $\gamma_g^{(s)}$ are inverses of one another.
\end{theorem}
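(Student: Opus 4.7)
The plan is to establish $\mu_g^{(s)}\gamma_g^{(s)}=I$ by a direct generating-function computation. Because both matrices are infinite lower triangular with diagonal entries equal to $1$---the $(i,i)$ entry of $\mu_g^{(s)}$ is the signed count of the unique single-part partition $(i+g+s-1)$, while the $(i,i)$ entry of $\gamma_g^{(s)}$ is $K_g^{(s)}(0,i+s-1)=1$ coming from the empty composition---a one-sided inverse is automatically two-sided, so this one identity suffices.

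Expanding the matrix product and setting $k=j+n$, I would first record
\begin{equation*}
(\mu_g^{(s)}\gamma_g^{(s)})_{i,j}=\sum_{n\ge 0}M_g^{(s)}(i+g+s-1,\,n+j+g+s-1)\,K_g^{(s)}(n,\,j+s-1),
\end{equation*}
which is a $q$-independent integer. Multiplying by $q^{i+g+s-1}$ and summing over $i$ reduces matters to the single-variable generating function
\begin{equation*}
\sum_{N\ge 1}M_g^{(s)}(N,m')\,q^{N}=\sum_{\lambda\in\sP_g^{(s)},\,\lambda_{\ell(\lambda)}=m'}(-1)^{\ell(\lambda)+1}q^{|\lambda|}.
\end{equation*}
The key combinatorial step is to strip off the largest part $m'$: the gap condition $\lambda_{\ell(\lambda)}-\lambda_{\ell(\lambda)-1}\ge g$ forces the remaining parts to form an (possibly empty) element of $\sP_{g,\le m'-g}^{(s)}$, and that bijection yields
\begin{equation*}
\sum_{N\ge 1}M_g^{(s)}(N,m')\,q^{N}=q^{m'}\,P_{g,\le m'-g}^{(s)}(-1,q).
\end{equation*}

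Applying this reduction with $m'=n+j+g+s-1$ and writing $m:=j+s-1$, the weighted column sum becomes
\begin{equation*}
\sum_{i\ge j}q^{i+g+s-1}(\mu_g^{(s)}\gamma_g^{(s)})_{i,j}=q^{j+g+s-1}\sum_{n\ge 0}K_g^{(s)}(n,m)\,q^{n}\,P_{g,\le n+m}^{(s)}(-1,q)=q^{j+g+s-1},
\end{equation*}
where the final equality is exactly \eqref{eq:K-ind}. Since each $(\mu_g^{(s)}\gamma_g^{(s)})_{i,j}$ is a $q$-independent integer and the monomials $q^{i+g+s-1}$ are pairwise distinct, comparing coefficients in this formal series forces $(\mu_g^{(s)}\gamma_g^{(s)})_{i,j}=\delta_{i,j}$. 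The main obstacle is simply identifying the correct weighting $q^{i+g+s-1}$: once chosen, the extra factor $q^{m'}=q^{n+j+g+s-1}$ produced by stripping the largest part recombines precisely with the $q^{n}$ weight in \eqref{eq:K-ind} to leave the shifted monomial $q^{j+g+s-1}$, and the desired identity reduces verbatim to the already-proved generalization of Theorem \ref{th:ABH1}, with no further $q$-series manipulation required.
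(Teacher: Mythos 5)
Your proposal is correct and follows essentially the same route as the paper: the key step in both is the identity $\sum_{N}M_g^{(s)}(N,m')q^N=q^{m'}P_{g,\le m'-g}^{(s)}(-1,q)$ obtained by stripping the largest part, after which the column sum collapses via \eqref{eq:K-ind}. The only cosmetic difference is that you weight the entries by $q^{i+g+s-1}$ and compare coefficients at the end, whereas the paper extracts the coefficient $[q^{i+g+s-1}]$ throughout; these are the same computation.
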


\begin{proof}
	For each positive integer $m$, we write
	$$P_{g,m}^{(s)}(q):=\sum_{\substack{\lambda\in\sP_g^{(s)}\,(\lambda\ne\emptyset)\\\lambda_{\ell(\lambda)}=m}}(-1)^{\ell(\lambda)+1}q^{|\lambda|}.$$
	Then
	$$P_{g,m}^{(s)}(q)=\begin{cases}
	0 & \text{if $1\le m\le s-1$},\\
	q^mP_{g,\le m-g}^{(s)}(-1,q) & \text{if $m\ge s$},
	\end{cases}$$
	where we put $P_{g,\le M}^{(s)}(-1,q)=1$ for $M<0$.
	
	Recall that $K_g^{(s)}(n,m)=0$ for $n<0$. We have, for $i,j\ge 1$,
	\begin{align*}
	(\mu_g^{(s)}\cdot \gamma_g^{(s)})_{i,j}&=\sum_{k\ge 1}(\mu_g^{(s)})_{i,k}(\gamma_g^{(s)})_{k,j}\\
	&=\sum_{k\ge j} M_g^{(s)}(i+g+s-1,k+g+s-1)K_g^{(s)}(k-j,j+s-1)\\
	&=\sum_{k\ge j} K_g^{(s)}(k-j,j+s-1)[q^{i+g+s-1}]P_{g,k+g+s-1}^{(s)}(q)\\
	&=[q^{i+g+s-1}]\sum_{k\ge j} K_g^{(s)}(k-j,j+s-1)q^{k+g+s-1}P_{g,\le k+s-1}^{(s)}(-1,q)\\
	&=[q^{i+g+s-1}]\sum_{k\ge 0} K_g^{(s)}(k,j+s-1)q^{k+j+g+s-1}P_{g,\le k+j+s-1}^{(s)}(-1,q)\\
	&=[q^i]q^j\sum_{k\ge 0} K_g^{(s)}(k,j+s-1)q^{k}P_{g,\le k+j+s-1}^{(s)}(-1,q)\\
	&=[q^i]q^j
	\end{align*}
	by applying \eqref{eq:K-ind}. We therefore conclude that $(\mu_g^{(s)}\cdot \gamma_g^{(s)})_{i,j}$ equals $1$ if $i=j$ and $0$ otherwise and thus arrive at the desired result.
\end{proof}

Below we provide an example of Theorem \ref{th:mu-gamma-ext}.

\begin{example}\label{ex:g=2}
	Let $g=2$ and $s=1$. Then
	\begin{equation*}
	\mu_2^{(1)}=\left(
	\begin{array}{ccccccccccccc}
	1 & 0 & 0 & 0 & 0 & 0 & 0 & 0 & 0 & 0 & 0 & 0 & \cdots\\
	-1 & 1 & 0 & 0 & 0 & 0 & 0 & 0 & 0 & 0 & 0 & 0 & \cdots\\
	0 & -1 & 1 & 0 & 0 & 0 & 0 & 0 & 0 & 0 & 0 & 0 & \cdots\\
	0 & -1 & -1 & 1 & 0 & 0 & 0 & 0 & 0 & 0 & 0 & 0 & \cdots\\
	0 & 0 & -1 & -1 & 1 & 0 & 0 & 0 & 0 & 0 & 0 & 0 & \cdots\\
	0 & 0 & -1 & -1 & -1 & 1 & 0 & 0 & 0 & 0 & 0 & 0 & \cdots\\
	0 & 0 & 1 & -1 & -1 & -1 & 1 & 0 & 0 & 0 & 0 & 0 & \cdots\\
	0 & 0 & 0 & 0 & -1 & -1 & -1 & 1 & 0 & 0 & 0 & 0 & \cdots\\
	0 & 0 & 0 & 1 & 0 & -1 & -1 & -1 & 1 & 0 & 0 & 0 & \cdots\\
	0 & 0 & 0 & 1 & 0 & 0 & -1 & -1 & -1 & 1 & 0 & 0 & \cdots\\
	0 & 0 & 0 & 0 & 2 & 0 & 0 & -1 & -1 & -1 & 1 & 0 & \cdots\\
	0 & 0 & 0 & 0 & 1 & 1 & 0 & 0 & -1 & -1 & -1 & 1 & \cdots\\
	\vdots & \vdots & \vdots & \vdots & \vdots & \vdots & \vdots & \vdots & \vdots & \vdots & \vdots & \vdots & \ddots\\
	\end{array}
	\right)
	\end{equation*}
	and
	\begin{equation*}
	\gamma_2^{(1)}=\left(
	\begin{array}{ccccccccccccc}
	1 & 0 & 0 & 0 & 0 & 0 & 0 & 0 & 0 & 0 & 0 & 0 & \cdots\\
	1 & 1 & 0 & 0 & 0 & 0 & 0 & 0 & 0 & 0 & 0 & 0 & \cdots\\
	1 & 1 & 1 & 0 & 0 & 0 & 0 & 0 & 0 & 0 & 0 & 0 & \cdots\\
	2 & 2 & 1 & 1 & 0 & 0 & 0 & 0 & 0 & 0 & 0 & 0 & \cdots\\
	3 & 3 & 2 & 1 & 1 & 0 & 0 & 0 & 0 & 0 & 0 & 0 & \cdots\\
	6 & 6 & 4 & 2 & 1 & 1 & 0 & 0 & 0 & 0 & 0 & 0 & \cdots\\
	10 & 10 & 6 & 4 & 2 & 1 & 1 & 0 & 0 & 0 & 0 & 0 & \cdots\\
	19 & 19 & 12 & 7 & 4 & 2 & 1 & 1 & 0 & 0 & 0 & 0 & \cdots\\
	33 & 33 & 21 & 12 & 7 & 4 & 2 & 1 & 1 & 0 & 0 & 0 & \cdots\\
	60 & 60 & 38 & 22 & 13 & 7 & 4 & 2 & 1 & 1 & 0 & 0 & \cdots\\
	106 & 106 & 67 & 39 & 22 & 13 & 7 & 4 & 2 & 1 & 1 & 0 & \cdots\\
	190 & 190 & 120 & 70 & 40 & 23 & 13 & 7 & 4 & 2 & 1 & 1 & \cdots\\
	\vdots & \vdots & \vdots & \vdots & \vdots & \vdots & \vdots & \vdots & \vdots & \vdots & \vdots & \vdots & \ddots\\
	\end{array}
	\right).
	\end{equation*}
\end{example}

We close this section with another observation about the matrix $\gamma_g^{(s)}$.

\begin{proposition}\label{prop:gamma}
	Let $k$ be a nonnegative integer and $n\ge 2k-s+1$ be a positive integer. For each nonnegative integer $g$ and positive integer $s$, we have that $(\gamma_g^{(s)})_{n,n-k}$ equals the number of compositions of $k$ in $\sC_g^{(s)}$.
	
	More generally, for nonnegative integers $g_1,\ldots,g_M$, if $c_{g_1,\ldots,g_M}^{(s)}(k)$ counts the number of $M$-tuples of compositions $(\kappa^{(1)},\ldots,\kappa^{(M)})$ such that $\kappa^{(j)}\in\sC_{g_j}^{(s)}$ for $1\le j\le M$ and $|\kappa^{(1)}|+\cdots+|\kappa^{(M)}|=k$, then $(\gamma_{g_1}^{(s)}\cdots \gamma_{g_M}^{(s)})_{n,n-k}=c_{g_1,\ldots,g_M}^{(s)}(k)$.
\end{proposition}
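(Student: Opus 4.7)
The plan is to first interpret $(\gamma_g^{(s)})_{n, n-k}$ directly as a constrained composition count and show that the constraint is vacuous under the hypothesis, and then to expand the general matrix product into chains and reduce each factor to the single-matrix case.

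For the first assertion, unwinding definitions gives $(\gamma_g^{(s)})_{n, n-k} = K_g^{(s)}(k, n-k+s-1)$, the number of $(n-k+s-1)$-step compositions of $k$ in $\sC_g^{(s)}$. I will show that the $m$-step constraint is automatically satisfied whenever $m \ge k$: for any composition $(\kappa_1, \ldots, \kappa_\ell)$ of $k$, every part satisfies $\kappa_j \le k$, while $m + \sum_{i < j} \kappa_i \ge m \ge k \ge \kappa_j$. The hypothesis $n \ge 2k-s+1$ gives $n-k+s-1 \ge k$, so the $m$-step condition drops out and $K_g^{(s)}(k, n-k+s-1)$ equals the total number of compositions of $k$ in $\sC_g^{(s)}$.

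For the general assertion, I will expand
\begin{align*}
(\gamma_{g_1}^{(s)} \cdots \gamma_{g_M}^{(s)})_{n, n-k} = \sum_{n = i_0 \ge i_1 \ge \cdots \ge i_M = n-k} \prod_{j=1}^{M} K_{g_j}^{(s)}\bigl(i_{j-1} - i_j,\, i_j + s - 1\bigr),
\end{align*}
and reindex the sum using $k_j := i_{j-1} - i_j \ge 0$, so that $k_1 + \cdots + k_M = k$ and $i_j = n - k_1 - \cdots - k_j$. Applying the first part with $i_{j-1}$ in place of $n$ and $k_j$ in place of $k$, the $j$-th factor equals the total number of compositions of $k_j$ in $\sC_{g_j}^{(s)}$ provided $i_{j-1} \ge 2k_j - s + 1$, which rearranges to $n \ge (k_1 + \cdots + k_{j-1}) + 2k_j - s + 1$. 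Since $(k_1 + \cdots + k_{j-1}) + 2k_j = (k_1 + \cdots + k_j) + k_j \le 2k$, the single hypothesis $n \ge 2k - s + 1$ uniformly handles every $j$ and every decomposition. Summing the resulting products of composition counts over all nonnegative $(k_1, \ldots, k_M)$ with $k_1 + \cdots + k_M = k$ yields exactly $c_{g_1, \ldots, g_M}^{(s)}(k)$.

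The only point requiring care is the verification that the single threshold $n \ge 2k-s+1$ is strong enough to trivialize the $m$-step constraint in every factor of every chain simultaneously; this is settled by the crude estimate above. The rest of the argument is pure bookkeeping: reinterpret the matrix entries as composition counts, expand the product, and reindex.
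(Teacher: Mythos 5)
Your proof is correct and follows essentially the same route as the paper: identify $(\gamma_g^{(s)})_{n,n-k}$ with $K_g^{(s)}(k,n-k+s-1)$ and observe that the step condition is vacuous since $n-k+s-1\ge k$, then exploit lower-triangularity to reduce the product entry to a convolution of composition counts. The only cosmetic difference is that you expand the full $M$-fold product over chains in one pass while the paper argues by induction on $M$.
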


\begin{proof}
	For the first part, $(\gamma_g^{(s)})_{n,n-k}=K_g^{(s)}(k,n-k+s-1)$ is the number of $(n-k+s-1)$-step compositions of $k$ in $\sC_g^{(s)}$. However, since $n-k+s-1\ge k$, every composition of $k$ is $(n-k+s-1)$-step.
	
	We then conclude the second part by induction on the number matrices in the product as soon as we notice that this product is always lower-triangular. If this statement is true for some $g_1,\ldots,g_M$, then
	\begin{align*}
	(\gamma_{g_1}^{(s)}\cdots \gamma_{g_M}^{(s)}\cdot \gamma_{g_{M+1}}^{(s)})_{n,n-k}&=\sum_{i\ge 1} (\gamma_{g_1}^{(s)}\cdots \gamma_{g_M}^{(s)})_{n,i}(\gamma_{g_{M+1}}^{(s)})_{i,n-k}\\
	&=\sum_{i=n-k}^{n} (\gamma_{g_1}^{(s)}\cdots \gamma_{g_M}^{(s)})_{n,i}(\gamma_{g_{M+1}}^{(s)})_{i,n-k}\\
	&=\sum_{i=n-k}^{n} c_{g_1,\ldots,g_M}^{(s)}(n-i)\cdot c_{g_{M+1}}^{(s)}(i-n+k)\\
	&=c_{g_1,\ldots,g_{M+1}}^{(s)}(k)
	\end{align*}
	for any nonnegative $g_{M+1}$.
\end{proof}

\begin{example}
	Let $p(n)$ , $p_D(n)$, and $\overline{p}(n)$ count the number of unrestricted partitions, distinct partitions, and overpartitions of $n$, respectively. Then for each nonnegative integer $k$ and positive integer $n\ge 2k$, $(\gamma_1^{(1)})_{n,n-k}=p(k)$, $(\gamma_0^{(1)})_{n,n-k}=p_D(k)$ and $(\gamma_0^{(1)}\cdot \gamma_1^{(1)})_{n,n-k}=(\gamma_1^{(1)}\cdot \gamma_0^{(1)})_{n,n-k}=\overline{p}(k)$.
\end{example}

\subsection*{Acknowledgements}

The second author was supported by a Killam Postdoctoral Fellowship from the Killam Trusts. We would like to thank George Andrews and Karl Dilcher for many helpful comments.

\bibliographystyle{amsplain}

\end{document}